\newcommand\g{{\mathfrak g}}
\newcommand\h{{\mathfrak h}}
\newcommand{\f}{\mathfrak{f}}
\renewcommand{\b}{\mathfrak{b}}
\newcommand\gl{\mathfrak{gl}}
\renewcommand{\a}{\mathfrak{a}}
\newcommand\m{\mathfrak m}
\newcommand\n{\mathfrak n}
\newcommand\q{\mathfrak q}
\newcommand\z{\mathfrak z}
\renewcommand{\t}{\mathfrak{t}}
\newcommand\codim{\operatorname{codim}}
\newcommand\im{\operatorname{im}}
\newcommand\Gr{\operatorname{Gr}}
\newcommand\Supp{\operatorname{Supp}}
\newcommand\K{{\mathbb{K}}}
\newcommand\X{\mathfrak X}
\newcommand\Q{\mathbb Q}
\renewcommand\P{\mathbb P}
\newcommand\D{\mathcal D}
\newcommand\Rad{\operatorname{R}}
\renewcommand\sl{\mathfrak{sl}}
\newcommand\so{\mathfrak{so}}
\renewcommand\sp{\mathfrak{sp}}
\newcommand\ord{\operatorname{ord}}
\newcommand\GL{\mathop{\rm GL}\nolimits}
\newcommand{\rank}{\mathop{\rm rk}\nolimits}
\newtheorem{Thm}{Theorem}
\newtheorem{Prop}{Proposition}[section]
\newtheorem{Lem}[Prop]{Lemma}
\theoremstyle{definition}
\newtheorem{defi}[Prop]{Definition}
\numberwithin{equation}{section}
\author{Ivan V. Losev}
\title{Demazure embeddings are smooth}
\thanks{{\it Key words and phrases}:
Demazure embedding, wonderful variety, spherical subgroup}
\thanks{{\it 2000 Mathematics Subject Classification.} 14M17}
\thanks{Partially supported by A. Moebius foundation}
\begin{document}
\begin{abstract}
We prove the conjecture of M. Brion stating that the closure of the
orbit of a selfnormalizing spherical subalgebra in the corresponding
Grassmanian  is smooth.
\end{abstract}
\maketitle
\section{Introduction}
Throughout the paper the base field $\K$ is algebraically closed and
of characteristic zero. Let $G$ be a connected semisimple algebraic
group of adjoint type and $\g$ its Lie algebra.

A subalgebra $\h\subset \g$ is said to be {\it spherical} if there
is a Borel subalgebra $\b\subset\g$ such that $\b+\h=\g$. For
example, if $\h=\g^\sigma$ for an involutory automorphism $\sigma$
of $\g$ (in this case $\h$ is called symmetric), then $\h$ is
spherical, see \cite{Vust1}. For a symmetric  subalgebra
$\h\subset\g$ De Concini and Procesi proved in \cite{CP} that the
closure $\overline{G\h}$ of the orbit $G\h$ in the corresponding
Grassman variety is smooth. Earlier this fact in a few special cases
was proved by Demazure, \cite{Dem}. It was conjectured by Brion in
\cite{Brion} that the same is true for any spherical subalgebra $\h$
coinciding with its normalizer (note that a symmetric subalgebra
satisfies this condition). The variety $\overline{G\h}$ is called
the {\it Demazure embedding} of $G\h$.

Let us explain why the smoothness of Demazure embeddings is
important. There is a nice class of smooth projective $G$-varieties,
so called {\it wonderful varieties}, possessing many amazing
properties, see \cite{Timashev_rev}, Section 30, for a review. Knop
proved in \cite{Knop8} that a homogeneous space $G\h$ is embedded as
an open $G$-orbit into (a unique) wonderful variety provided $\h$ is
spherical and coincides with its normalizer. If $\overline{G\h}$ is
smooth, then it coincides with this wonderful variety. Brion's
results, \cite{Brion}, imply that the normalization of
$\overline{G\h}$ is wonderful.

The following theorem  is the main result of this paper.
\begin{Thm}\label{Thm_main}
Let $\h$ be a spherical subalgebra of $\g$ coinciding with its
normalizer. Then the Demazure embedding $\overline{G\h}$ is smooth.
\end{Thm}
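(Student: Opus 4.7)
The plan is to combine the results quoted in the introduction with a local tangent-space computation at the closed orbit. By Knop's theorem the homogeneous space $G\h$ sits as the open $G$-orbit of a (unique) wonderful $G$-variety $X$, and by Brion's results the natural $G$-equivariant morphism $\nu\colon X\to\overline{G\h}$ is finite and birational, so $X$ is the normalization of $\overline{G\h}$. Since $X$ is smooth, proving the theorem is equivalent to proving that $\overline{G\h}$ is normal; for this it suffices to show $\dim T_y\overline{G\h}=\dim X$ for every $y\in\overline{G\h}$. The function $y\mapsto\dim T_y\overline{G\h}$ is $G$-invariant and upper semi-continuous, so it is enough to check the equality at a single point of the unique closed $G$-orbit of $\overline{G\h}$.

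Fix a maximal torus $T\subset G$, pick a $T$-fixed point $x_0\in X$ in the closed orbit, set $y_0=\nu(x_0)$, and let $\h_0\subset\g$ be the corresponding subalgebra. Under the identification $T_{y_0}\Gr=\Hom(\h_0,\g/\h_0)$, the tangent space $T_{y_0}\overline{G\h}$ is a $T$-submodule containing the orbit tangent space $T_{y_0}(Gy_0)=\g/\n_\g(\h_0)$. The local structure theorem for wonderful varieties identifies an affine open neighbourhood of $x_0$ in $X$ with $R_u(P^-)\times S$ for a parabolic $P\supset B$ and a $T$-stable slice $S\cong\K^r$ whose $T$-weights are the negatives $-\sigma_1,\dots,-\sigma_r$ of the spherical roots of $X$. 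It therefore suffices to exhibit, for each $i$, a nonzero $T$-eigenvector of weight $-\sigma_i$ in $T_{y_0}\overline{G\h}$ not contained in $T_{y_0}(Gy_0)$: the resulting $r$ eigenvectors, having distinct $T$-weights, will automatically be linearly independent, and together with $T_{y_0}(Gy_0)$ they span a subspace of dimension $\dim X$ as required.

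Such an eigenvector should be produced by a one-parameter curve $C_i\subset X$ through $x_0$ tangent to the $i$-th weight line of the slice $S$: its image under $\nu$ is a curve $\Gamma_i\subset\overline{G\h}$ through $y_0$, and by $T$-equivariance its tangent at $y_0$ is a $T$-eigenvector of weight $-\sigma_i$. The principal obstacle\,---\,and, as I see it, the technical heart of the paper\,---\,is to verify that this tangent vector is genuinely non-zero, i.e.\ that $\nu$ is unramified at $x_0$ in every slice direction. This is exactly the point where the self-normalizing hypothesis $\h=\n_\g(\h)$ must enter: without it, analogous orbit closures in the Grassmannian are known to be strictly less regular along the boundary. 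Carrying out this nonvanishing check presumably requires an explicit analysis of the one-parameter family of subalgebras degenerating to $\h_0$ along $C_i$, viewed as an element of $\Hom(\h_0,\g/\h_0)$, and once it is in place the proof is complete.
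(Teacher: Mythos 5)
Your reduction is sound and is in fact the same starting point the paper uses: Knop's theorem produces the wonderful variety $X$, Brion's theorem identifies $\delta_X\colon X\to\overline{G\h}$ as the normalization, and Luna's lemma (Lemma \ref{Lem:3.1} in the paper) reduces everything to showing that the differential of $\delta_X$ at the $B^-$-fixed point $z$ of the closed orbit is injective on each one-dimensional $T$-eigenspace $(T_zX)_\gamma$ with $\gamma$ a spherical root; eigenspaces of other weights already lie in $T_z(Gz)$, where injectivity is automatic. Up to this point you have reconstructed the framework correctly.

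However, you then write that the nonvanishing of these slice tangent vectors ``presumably requires an explicit analysis \dots and once it is in place the proof is complete.'' That nonvanishing \emph{is} the theorem; everything before it is quoted from Knop, Brion and Luna. What is missing from your proposal is the entire content of Section 3: (i) the combinatorial translation of the hypothesis $\h=\n_\g(\h)$ into rigidity of $X$, i.e.\ the absence of distinguished spherical roots (Proposition \ref{Prop:5}), which is the only place self-normalization is actually used; (ii) the localization argument (Proposition \ref{Prop:3.1}) reducing to varieties that are \emph{critical} for a given spherical root $\alpha$; (iii) for $\alpha\notin\Pi$, the construction, via Pezzini's simple immersions and the sphericity of every subalgebra in $\im\delta_X$ (Lemma \ref{Lem:3.2}, which forces $\dim V^{\f}=1$), of a morphism $\psi\colon\delta_X(\overline{X}^\alpha)\to\P(V)$ splitting a known embedding $\varphi$ of $\overline{X}^\alpha$ --- this is what certifies the nonvanishing without any curve computation; and (iv) for $\alpha\in\Pi\cap\Psi_{G,X}$, a separate reduction to rank two using the two colors $D^\pm$ with $\varphi_{D^+}\neq\varphi_{D^-}$ and a normalizer argument comparing $\h_0$ with $\widetilde\h_0$. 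Your proposed route --- directly analyzing the one-parameter degeneration of subalgebras along a curve $C_i$ as an element of $\Hom(\h_0,\g/\h_0)$ --- is not carried out and is not obviously tractable; the paper avoids it precisely by factoring $\varphi$ through $\delta_X$ rather than computing the differential by hand. As it stands, the proposal identifies the problem but does not solve it.
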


 Let us note that this theorem was already proved
under certain restrictions on $\g$. In \cite{Luna_Dem} Luna gave the
proof  in the case when all simple ideals of $\g$ are $\sl$'s. In
\cite{BP} Luna's technique was extended to the case when any simple
ideal of $\g$ is isomorphic to $\sl_k$ or $\so_{2k}$. In this paper
we do not follow Luna's approach directly, however we use many
results
from \cite{Luna_Dem}.

Theorem \ref{Thm_main} is proved in Section 3. In Section 2 we recall some previosuly obtained
results related to wonderful varieties. %This approach
%is based on the classification of wonderful varieties in terms of
%certain combinatorial data. We note that a spherical homogeneous
%space is isomorphic to $G\h$ with $\h=\n_\g(\h)$ iff it has no
%equivariant automorphisms.  An important tool used by Luna is the
%criterium for the open orbit in a wonderful variety to have no
%equivariant automorphisms. This criterium is stated in terms of
%combinatorial invariants introduced by Luna to classify wonderful
%varieties. Despite the classification of wonderful varieties is not
%available for general $\g$, a similar criterium is still available
%in the general case. This criterium follows directly from
%\cite{unique}, Theorem 2.

{\bf Acknowledgements.} This paper was written during my visit to
Rutgers University, New Brunswick, in the beginning of 2007. I would
like to thank this institution and especially Professor F. Knop for
hospitality. I am also grateful to F. Knop for the formulation of
the problem and stimulating discussions.

\section{Preliminaries}\label{SECTION_prelim}
Below $G$ is a connected semisimple algebraic group of adjoint type,
$B$ its Borel subgroup, $T$ a maximal torus of $B$, $\Pi$ is the
system of simple roots of $G$ and $B^-$ is the Borel subgroup of $G$
containing $T$ and opposite to $B$. Let $\X(T)$ denote the character
lattice of $T$ (=the root lattice of $G$).

At first, let us recall the definition of a wonderful variety.
References are \cite{Luna4},\cite{Luna5},\cite{Luna_Dem},
\cite{Timashev_rev}, Section 30.

\begin{defi}\label{defi:1}
A $G$-variety $X$ is called  {\it wonderful}  if the following
conditions are satisfied:
\begin{enumerate}
\item $X$ is smooth and projective.
\item There is an open $G$-orbit $X^0\subset X$.
\item $X\setminus X^0$ is a divisor with normal crossings.
\item Let $D_1,\ldots,D_r$ be irreducible components of $X\setminus
X^0$. Then for any subset $I\subset \{1,\ldots,r\}$  the subvariety
$\bigcap_{i\in I}D_i\setminus\cup_{j\not\in I}D_j$ is a single
$G$-orbit.
\end{enumerate}
The number $r$ is called the rank of $X$ and is denoted by
$\rank_G(X)$.
\end{defi}

Note that $\bigcap_{i\in I}D_i$ is a wonderful $G$-variety of rank
$\#I$ for any $I\subset \{1,\ldots,r\}$.

It is known that a wonderful variety $X$ is spherical, that is $B$
has an open orbit on $X$.

Let us now establish some combinatorial invariants of a wonderful
$G$-variety $X$.

Note that $\cap_{i=1}^rD_r$ is a generalized flag variety. So there
is a unique $B^-$-stable point  $z\in \cap_{i=1}^r D_r$. The group
$T$ acts linearly on the normal space $T_zX/T_z(Gz)$. Note that
$T_zX/T_z(Gz)=\bigoplus_{i=1}^r T_zX/T_zD_i$. Let $\alpha_i$ denotes
the character of the action $T:T_zX/T_zD_i$. Let us fix an
$N_G(T)/T$-invariant scalar product on $\t^*(\Q)$. With respect to
this scalar product $\Psi_{G,X}:=\{\alpha_1,\ldots,\alpha_r\}$ is a
system of simple root of some root system in the linear span of
$\alpha_1,\ldots,\alpha_r$. The set $\Psi_{G,X}$ is called the {\it
system of spherical roots} of $X$. The definition of $\Psi_{G,X}$
given here agrees with that we use in \cite{unique}. Let $\X_{G,X}$
denote the sublattice in $\X(T)$ generated by $\Psi_{G,X}$. It is
known that $\X_{G,X}$ coincides with the set of all $\lambda\in
\X(T)$ such that there is a $B$-semiinvariant rational function
$f_\lambda$ on $X$ of weight $\lambda$ (determined uniquely up to
rescaling because $X$ is spherical). Choose a subset $\Psi_0\subset
\Psi_{G,X}$.  Put $\overline{X}^{\alpha_i}:=D_i,
\overline{X}^{\Psi_0}:=\cap_{ \alpha\in
\Psi_0}\overline{X}^{\alpha},
X^{\Psi_0}=\overline{X}^{\Psi_0}\setminus \cup_{\alpha\not\in
\Psi_0}\overline{X}^\alpha$.

Let $\D_{G,X}$ denote the set of all prime $B$-stable but not
$G$-stable divisors  on $X$
 (this definition differs slightly from that used in \cite{unique}). To each $D\in \D_{G,X}$ we
assign its stabilizer $G_D\subset G$, which is a parabolic subgroup
of $G$ containing $B$, and an element  $\varphi_D\in \X_{G,X}^*$
defined by $\langle\varphi_D,\lambda\rangle=\ord_D(f_\lambda)$. For
$\alpha\in \Pi$ by $P_\alpha$ we denote the minimal parabolic
subgroup of $G$ containing $B$ corresponding to the simple root
$\alpha$. Put $\D_{G,X}(\alpha)=\{D\in \D_{G,X}| P_\alpha\not\subset
G_D\}$.

The following propositions are due to Luna, see
\cite{Luna4},\cite{Luna5}.

\begin{Prop}\label{Prop:1}
For $\alpha\in \Pi(\g)$ exactly one of the following possibilities
takes place:
\begin{itemize}
\item[(à)] $\D_{G,X}(\alpha)=\varnothing$.
\item[(b)] $\alpha\in \Psi_{G,X}$. Here
$\D_{G,X}(\alpha)=\{D^+,D^-\}$ and
$\varphi_{D^+}+\varphi_{D^-}=\alpha^\vee|_{\a_{G,X}},
\langle\varphi_{D^\pm},\alpha\rangle=1$.
\item[(c)]  $2\alpha\in \Psi_{G,X}$. In this case $\D_{G,X}(\alpha)=\{D\}$ and $\varphi_{D}=\frac{1}{2}\alpha^\vee|_{\a_{G,X}}$.
\item[(d)] $\Q\alpha\cap \Psi_{G,X}=\varnothing, \D_{G,X}(\alpha)\neq\varnothing$. In this case
$\D_{G,X}(\alpha)=\{D\}$ and $\varphi_{D}=\alpha^\vee|_{\a_{G,X}}$.
\end{itemize}
\end{Prop}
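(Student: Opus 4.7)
The plan is to reduce the statement to a local analysis under the minimal parabolic $P_\alpha$ and invoke the classification of rank-one spherical $SL_2$-varieties.

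First, I observe that $D\in\D_{G,X}(\alpha)$ precisely when $P_\alpha$ does not preserve $D$. Since $B\cdot D=D$ and $P_\alpha/B\cong\P^1$, the orbit $P_\alpha\cdot D$ has dimension $\dim D$ or $\dim D+1$; in the latter case $P_\alpha\cdot D$ is dense in $X$. Hence divisors in $\D_{G,X}(\alpha)$ are exactly those $B$-stable prime divisors whose $P_\alpha$-saturation is all of $X$.

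Next I apply Luna's local structure theorem at a generic point of the open $B$-orbit: this produces a $B$-stable affine open subset $X^\circ\subset X$ and a decomposition $X^\circ\cong R_u(Q)\times Z$, where $Q\supset B$ is a suitable parabolic, $L$ is a Levi of $Q$, and $Z$ is an $L$-stable affine slice on which $[L,L]$ acts spherically. Either $P_\alpha\subset Q$ — in which case $P_\alpha$ preserves every $B$-stable divisor, $\D_{G,X}(\alpha)=\varnothing$, and we are in case (a) — or $P_\alpha\not\subset Q$, in which case the derived subgroup $[P_\alpha,P_\alpha]$ (isomorphic to $SL_2$ or $PGL_2$) acts on $Z$ non-trivially and spherically of rank one. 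In the latter situation the classification of rank-one spherical $SL_2$-varieties (see, e.g., \cite{Timashev_rev}) furnishes exactly three possibilities for the closure of a generic $P_\alpha$-orbit in $Z$, corresponding to cases (b), (c) and (d), distinguished by the number of $B$-stable divisors met by $Z$ and by whether $\alpha$, $2\alpha$, or no multiple of $\alpha$ occurs as the $T$-weight on the tangent space to the closed $P_\alpha$-orbit. In particular $|\D_{G,X}(\alpha)|\le 2$.

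Finally, the numerical assertions about $\varphi_D$ follow by computing $\ord_D(f_\lambda)$ for a $B$-semi-invariant rational function $f_\lambda$ of weight $\lambda\in\X_{G,X}$ restricted to the slice $Z$. In case (b), the reflection $s_\alpha$ swaps $D^+$ and $D^-$, and the $SL_2$-transformation law applied to $f_\lambda$ yields $\varphi_{D^+}+\varphi_{D^-}=\alpha^\vee|_{\a_{G,X}}$, while symmetry forces each pairing $\langle\varphi_{D^\pm},\alpha\rangle$ to equal $1$. In case (c), the local model is $\P^2$ with an $SO_3$-action on the quadric, and a direct $SL_2$-computation, taking into account that the spherical root is $2\alpha$ rather than $\alpha$, yields $\varphi_D=\tfrac12\alpha^\vee|_{\a_{G,X}}$. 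In case (d), the local model is $\P^1$ and the analogous calculation gives $\varphi_D=\alpha^\vee|_{\a_{G,X}}$.

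The principal difficulty is calibrating the local model so that the Luna-Vust combinatorial data on $Z$ faithfully reproduce the global invariants $\varphi_D$ and $\Psi_{G,X}$. Once that identification is in place, the four cases are rigidly dictated by the classification of rank-one spherical $SL_2$-varieties and by a short $SL_2$-computation of orders of vanishing.
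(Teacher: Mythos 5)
The paper does not actually prove Proposition \ref{Prop:1}: it is quoted as a result of Luna with a pointer to \cite{Luna4}, \cite{Luna5}, so there is no internal argument to measure yours against, and your overall plan (reduce to a rank-one situation attached to $P_\alpha$, invoke the classification of rank-one spherical varieties, then compute orders of vanishing of $f_\lambda$) is indeed the route taken in those references. However, your reduction step, as written, fails. The local structure theorem at the open $B$-orbit gives $X^\circ\cong \Rad_u(Q)\times Z$ where $Q=\bigcap_{D\in\D_{G,X}}G_D$ is the stabilizer of the open $B$-orbit and only a Levi $L$ of $Q$ acts on $Z$. The dichotomy $P_\alpha\subset Q$ versus $P_\alpha\not\subset Q$ does detect case (a), but in the remaining cases --- exactly the ones you need to analyze --- $\alpha$ is not a root of $L$, so $[P_\alpha,P_\alpha]$ does not act on $Z$ and does not even preserve $X^\circ$; the sentence ``$[P_\alpha,P_\alpha]$ acts on $Z$ non-trivially and spherically of rank one'' has no content in this setup. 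What is needed is the localization with respect to $\alpha$ alone: apply the local structure theorem to the larger parabolic $Q_\alpha=\bigcap_{D\notin\D_{G,X}(\alpha)}G_D$, which does contain $P_\alpha$, or equivalently pass to the closure of a generic $P_\alpha$-orbit, or to the localization $X_{\{\alpha\}}$ already defined in Section \ref{SECTION_prelim}. It is there that the rank-one models $\P^1$, $\P^1\times\P^1$, $\P(\sl_2)$ appear and yield cases (d), (b), (c) respectively, together with the count $\#\D_{G,X}(\alpha)\leqslant 2$.

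The second soft spot is the derivation of $\varphi_{D^+}+\varphi_{D^-}=\alpha^\vee|_{\a_{G,X}}$ from ``$s_\alpha$ swaps $D^+$ and $D^-$.'' A representative of $s_\alpha$ in $N_G(T)$ conjugates $B$ to a different Borel, so it does not act on the set of $B$-stable divisors, and Knop's Weyl-group action on $B$-orbits does not in general interchange the two divisors attached to a type (b) root; moreover the identity must hold for every $\lambda\in\X_{G,X}$, not only for $\lambda=\alpha$, which no symmetry of the rank-one fibre alone can give. The correct argument is the one you gesture at in the same sentence: restrict $f_\lambda$ to the closure $Y$ of a generic $P_\alpha$-orbit; the divisor of $f_\lambda|_Y$ is supported on the two $(B\cap P_\alpha)$-stable divisors of $Y$, which are the traces of $D^+$ and $D^-$, with multiplicities $\ord_{D^\pm}(f_\lambda)$, and since $f_\lambda|_Y$ is a $B$-eigenvector whose weight for the $\SL_2$ attached to $\alpha$ is determined by $\langle\alpha^\vee,\lambda\rangle$, a direct computation on $\P^1\times\P^1$ gives $\ord_{D^+}(f_\lambda)+\ord_{D^-}(f_\lambda)=\langle\alpha^\vee,\lambda\rangle$. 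The normalization $\langle\varphi_{D^\pm},\alpha\rangle=1$ then comes from the explicit rank-one computation with $\lambda=\alpha$ (the cross-ratio type eigenfunction vanishes to order exactly one on each of the two colors), not from symmetry. One also needs to check that the generic $P_\alpha$-orbit closure maps birationally onto its rank-one model so that orders of vanishing are preserved; this calibration is precisely the point you flag at the end but do not carry out. With these two repairs your outline becomes the standard proof.
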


We say that a root $\alpha\in \Pi$ is {\it of type} a) (or b),c),d))
if the corresponding possibility takes place for $\alpha$.

\begin{Prop}\label{Prop:3}
Let $\alpha,\beta\in \Pi(\g)$. If
$\D_{G,X}(\alpha)\cap\D_{G,X}(\beta)\neq\varnothing$, then exactly
one of the following possibilities takes place:
\begin{enumerate}
\item $\alpha,\beta$ are of type b) and
$\#\D_{G,X}(\alpha)\cap\D_{G,X}(\beta)=1$.
\item $\alpha,\beta$ are of type d),
$\langle\alpha^\vee,\beta\rangle=0,\alpha^\vee-\beta^\vee|_{\a_{G,X}}=0$,
and $\alpha+\beta=\gamma$ or $2\gamma$ for some
$\gamma\in\Psi_{G,X}$.
\end{enumerate}
Conversely, if $\alpha,\beta\in\Pi$ are such as in (2), then
$\D_{G,X}(\alpha)=\D_{G,X}(\beta)$.
\end{Prop}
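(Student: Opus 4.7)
The plan is a case analysis on the types of $\alpha$ and $\beta$ afforded by Proposition~\ref{Prop:1}, supplemented by a reduction to a rank-$2$ wonderful subvariety for the fine numerical assertions in the (d,d) case. Since $\D_{G,X}(\alpha)\cap\D_{G,X}(\beta)\neq\varnothing$, neither $\alpha$ nor $\beta$ is of type (a), leaving six unordered type-pairs to examine: (b,b), (b,c), (b,d), (c,c), (c,d), (d,d).

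I would first dispose of all mixed pairs by computing $\langle\varphi_D,\alpha\rangle$ in two different ways for any $D\in\D_{G,X}(\alpha)\cap\D_{G,X}(\beta)$. Proposition~\ref{Prop:1} applied to $\alpha$ yields $\langle\varphi_D,\alpha\rangle=1$ when $\alpha$ is of type (b) or (c), and $\langle\varphi_D,\alpha\rangle=2$ when $\alpha$ is of type (d). Applied to $\beta$, the same pairing equals $\tfrac{1}{2}\langle\beta^\vee,\alpha\rangle$ or $\langle\beta^\vee,\alpha\rangle$ depending on whether $\beta$ has type (c) or (d); since $\alpha\neq\beta$ are distinct simple roots, $\langle\beta^\vee,\alpha\rangle\leq 0$, which is incompatible with the required positive value in each mixed pair. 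Only the homogeneous cases (b,b) and (d,d) survive.

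In case (b,b), the formulas $\langle\varphi_{D_\alpha^\pm},\alpha\rangle=1$, $\varphi_{D_\alpha^+}+\varphi_{D_\alpha^-}=\alpha^\vee|_{\a_{G,X}}$, and their analogues for $\beta$, together with the linear independence of the simple spherical roots, force $\#(\D_{G,X}(\alpha)\cap\D_{G,X}(\beta))=1$, proving part~(1). In case (d,d), the uniqueness of colors gives $\D_{G,X}(\alpha)=\D_{G,X}(\beta)=\{D\}$ and hence $(\alpha^\vee-\beta^\vee)|_{\a_{G,X}}=0$. To extract the remaining assertions $\langle\alpha^\vee,\beta\rangle=0$ and the existence of $\gamma\in\Psi_{G,X}$ with $\alpha+\beta\in\{\gamma,2\gamma\}$, I would localize to a rank-$2$ wonderful subvariety $\overline{X}^{\Psi_0}$ chosen so that its spherical roots are exactly those supported on the Levi generated by $\alpha$ and $\beta$, and then appeal to the Wasserman classification of rank-$2$ wonderful varieties, where only a handful of spherical systems admit two type-(d) simple roots sharing a color, each realizing one of the listed numerical patterns.

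The converse follows from the same ingredients: given $\alpha,\beta$ as in~(2), the coroot relation combined with the uniqueness of type-(d) colors forces the corresponding $\varphi$'s to coincide, and a color is determined by its parabolic stabilizer together with its value on the spherical root lattice, so $\D_{G,X}(\alpha)=\D_{G,X}(\beta)$. The principal obstacle I anticipate is the rank-$2$ reduction in case (d,d); the rest is short bookkeeping with Proposition~\ref{Prop:1}.
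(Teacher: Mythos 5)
The paper itself offers no proof of this proposition: it is quoted from Luna \cite{Luna4}, \cite{Luna5}, and the only internal precedent is the proof of Proposition~\ref{Prop:2}, which proceeds exactly as you propose for the (d,d) case, namely by passing to a rank-two wonderful subvariety and invoking the classification of \cite{Wasserman}. Your elimination of the mixed pairs and of (c,c), and your treatment of (b,b), are correct in substance: for $D\in\D_{G,X}(\alpha)\cap\D_{G,X}(\beta)$ you evaluate $\varphi_D$ on $\alpha$ (or on $2\alpha$) from both sides of Proposition~\ref{Prop:1} and play $1$ or $2$ against $\langle\beta^\vee,\alpha\rangle\leqslant 0$. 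But your opening claim that $\langle\varphi_D,\alpha\rangle=2$ when $\alpha$ is of type d) is not meaningful: $\varphi_D$ lies in $\X_{G,X}^*$ and can only be paired with elements of $\X_{G,X}\otimes\Q$, and for a type-d) root no rational multiple of $\alpha$ is a spherical root, so $\alpha$ need not lie in $\a_{G,X}$. This is not cosmetic --- if that identity were available, the same computation ($2$ versus $\langle\beta^\vee,\alpha\rangle\leqslant 0$) would rule out the (d,d) configuration altogether, contradicting part (2). You are saved only because in every pair you actually dispose of, the other root contributes an element of $\Psi_{G,X}$ to pair against; you should say so explicitly.

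The genuine gap is the converse. The principle you invoke --- that a color is determined by its parabolic stabilizer together with its functional on $\X_{G,X}$ --- is false: for the wonderful embedding of $\operatorname{PGL}_2/T$ the two colors $D^+,D^-$ both have stabilizer $B$ and $\varphi_{D^+}=\varphi_{D^-}$ (each sends $\alpha$ to $1$), yet they are distinct. So from $\varphi_{D_\alpha}=\varphi_{D_\beta}$ you cannot conclude $D_\alpha=D_\beta$. The converse has to be extracted the same way as the forward (d,d) numerology: localize at $\Pi'=\{\alpha,\beta\}$, which reduces to a group of semisimple rank $2$ while preserving $\D(\alpha)$, $\D(\beta)$, the functionals and the spherical roots $\gamma$ with $\Supp(\gamma)\subset\{\alpha,\beta\}$, and then read off from the classification in rank at most $2$ that every configuration satisfying (2) has a single color moved by both roots. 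Note also that your reduction as stated does not quite land in Wasserman's tables: a subvariety $\overline{X}^{\Psi_0}$ with $\Psi_0$ the set of spherical roots supported on $\{\alpha,\beta\}$ may have rank $0$ or $1$, so you need the rank $\leqslant 1$ lists in addition to \cite{Wasserman}, and you should be careful to distinguish the $G$-stable subvariety $\overline{X}^{\Psi_0}$ from the localization $X_{\Pi'}$, which is what actually controls supports. With those repairs your outline coincides with the standard (Luna's) argument.
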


\begin{Prop}\label{Prop:2}
Let $\alpha\in \Psi_{G,X},\beta\in \Pi\cap\Psi_{G,X}, D\in
\D_{G,X}(\beta)$. Then $\langle\varphi_D,\alpha\rangle\leqslant 1$
and the equality holds iff $\alpha\in \Pi, D\in \D_{G,X}(\alpha)$.
\end{Prop}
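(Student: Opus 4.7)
The strategy is a case split based on whether the spherical root $\alpha$ equals the simple root $\beta$.

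The easy case is $\alpha=\beta$. Then $\alpha\in\Pi$ and $D\in\D_{G,X}(\beta)=\D_{G,X}(\alpha)$, while Proposition~\ref{Prop:1}(b) yields $\langle\varphi_D,\alpha\rangle=1$. Both the inequality with equality, and the ``iff'', are immediate.

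So assume $\alpha\neq\beta$. The plan is to reduce to a rank-$2$ wonderful variety by the standard localization procedure and then verify the statement from the classification of rank-$2$ wonderful varieties. Concretely, set $\Psi_0:=\Psi_{G,X}\setminus\{\alpha,\beta\}$ and $Y:=\overline{X}^{\Psi_0}$. By the remark after Definition~\ref{defi:1}, $Y$ is wonderful of rank $2$ with spherical roots $\{\alpha,\beta\}$; one checks, following \cite{Luna4}, that $\beta$ remains of type (b) on $Y$, that $D\cap Y$ is a color of $Y$ moved by $\beta$, and that the pairing $\langle\varphi_{D\cap Y},\alpha\rangle$ computed on $Y$ coincides with $\langle\varphi_D,\alpha\rangle$ on $X$. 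This reduces us to a rank-$2$ situation with $\beta$ a simple spherical root of type (b). Wasserman's classification then provides a short finite list of diagrams to inspect; in each diagram one reads off the colors and the functionals $\varphi_{D^{\pm}}$, checking that $\langle\varphi_{D^{\pm}},\alpha\rangle\leq 1$ and that the equality forces $\alpha\in\Pi$ with $D\in\D_{G,X}(\alpha)$, exactly as predicted by Proposition~\ref{Prop:3}.

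The main obstacle is not the case check, which is mechanical once the classification is in hand, but the technical compatibility between $X$ and $Y$: one must verify that the $B$-semiinvariant $f_\alpha$ on $X$ restricts without extra zeros or poles to a $B$-semiinvariant of weight $\alpha$ on $Y$, so that $\ord_{D\cap Y}f_\alpha$ agrees with $\ord_D f_\alpha$. An alternative route that avoids the classification would be to apply $\sl_2(\beta)$-theory directly to $f_\alpha$: since $P_\beta$ moves the color $D$, the $P_\beta$-module generated by $f_\alpha$ is finite-dimensional and its weight structure bounds the order of $f_\alpha$ along $D$. This is conceptually cleaner but makes the bookkeeping of signs and orders more delicate.
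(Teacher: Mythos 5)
Your proposal is correct and takes essentially the same route as the paper: the paper's two-line proof likewise replaces $X$ by the rank-two orbit closure $\overline{X}^{\alpha,\beta}$ (citing Luna's localization results from \cite{Luna4}, Subsection 3.5, and \cite{Luna5}, Subsection 3.2, for exactly the compatibility of colors and functionals you flag as the technical point) and then invokes Wasserman's classification of rank-two wonderful varieties. Your separate treatment of $\alpha=\beta$ and your remarks on restricting $f_\alpha$ merely spell out details the paper delegates to those references.
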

\begin{proof}
It follows from results of \cite{Luna4}, Subsection 3.5, (see also
\cite{Luna5}, Subsection 3.2) that in the proof one may replace $X$
with $\overline{X}^{\alpha,\beta}$. In this case everything follows
from the classification in \cite{Wasserman}.
\end{proof}

Now we are going to describe the localization procedure for
wonderful varieties.

Choose a subset $\Pi'\subset \Pi$. Let $M$ be the Levi subgroup of
$G$ corresponding to $\Pi'$. Put $G_{\Pi'}:=(M,M), Q^-=B^-M$. Then
there is a $G_{\Pi'}$-stable subvariety $X_{\Pi'}\subset
X^{\Rad(Q^-)}$ (where $\Rad(\cdot)$ denotes the radical) satisfying
the following conditions:
\begin{enumerate}
\item $z\in X_{\Pi'}$.
\item $X_{\Pi'}$ is a wonderful $G_{\Pi'}$-variety.
\item $\Psi_{G_{\Pi'},X_{\Pi'}}=\{\alpha\in \Psi_{G,X}| \Supp(\alpha)\subset
\Pi'\}$ (here and below  $\Supp(\alpha)$  stands for the set of all
$\beta\in\Pi$ such that the coefficient of $\beta$ in $\alpha$ is
nonzero).
\item For any $\alpha\in
\Pi'$ there is a bijection
$\iota:\D_{G_{\Pi'},X_{\Pi'}}(\alpha)\rightarrow\D_{G,X}(\alpha)$
such that $\varphi_D$ is the projection of $\varphi_{\iota(D)}$ to
$\X_{G_{\Pi'},X_{\Pi'}}$.
\item $GX_{\Pi'}=\overline{X}^{\Psi_{G,X}\setminus \Psi_{G_{\Pi'},X_{\Pi'}}}$.
\end{enumerate}
The $G_{\Pi'}$-variety $X_{\Pi'}$ is called the {\it localization}
of $X$ at $\Pi'$.

Proceed to the definition of Demazure morphisms.

Choose a point $x\in X^\varnothing$ and put $\h:=\g_x, d:=\dim\h$.
The Demazure morphism $\delta_X:X\rightarrow \Gr_d(\g)$ is defined
as follows: it maps $y\in X$ to the inefficiency kernel of the
representation of $\g_y$ in $T_xX/T_x(Gy)$, for example,
$\delta_X(x)=\h,\delta_X(z)$ is the intersection of the kernels of
all $\alpha\in\Psi_{G,X}$, where $\alpha\in \Psi_{G,X}$ is
considered as a character of a parabolic subalgebra $\g_z$. It is
known that the image of $\g_y$ in $\gl(T_xX/T_x(Gy))$ is a Cartan
subalgebra, so $\im\delta_X$ does lie in $\Gr_d(\g)$. Moreover,
$\im\delta_X=\overline{G\h}$. In \cite{Brion} Brion proved that
$\delta_X$ is the normalization morphism. So  $\overline{G\h}$ is
smooth iff $\delta_X$ is an isomorphism.  Further, Brion's result
implies the following statement.

\begin{Lem}\label{Lem:3.2}
Any element of $\im\delta_X$ is a spherical algebraic subalgebra of
$G$.
\end{Lem}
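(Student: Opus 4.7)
The plan is to establish separately algebraicity and sphericity of an arbitrary element $\h':=\delta_X(y)\in\Gr_d(\g)$. Algebraicity is immediate from the definition: $\delta_X(y)$ is the Lie algebra of the kernel of the algebraic group homomorphism $G_y\to\GL(T_yX/T_y(Gy))$, which is a closed algebraic subgroup of $G$.

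For sphericity, I would first use that since $X$ is wonderful (hence spherical), every $G$-orbit $Gy\subset X$ is a spherical homogeneous space: concretely, $Gy$ is the open $G$-orbit in the wonderful subvariety $\overline{X}^{\Psi_0}$ for a suitable $\Psi_0\subset\Psi_{G,X}$. Thus $\g_y$ is a spherical subalgebra of $\g$. Combined with the facts noted in the excerpt that $\delta_X(y)$ is an ideal of $\g_y$ with quotient $\g_y/\delta_X(y)$ identified with a Cartan subalgebra of $\gl(T_yX/T_y(Gy))$ (in particular toral), and with Brion's theorem that $\delta_X$ is the normalization morphism (hence finite), the dimension equality $\dim G\cdot\delta_X(y)=\dim Gy$ together with the automatic inclusion $\g_y\subseteq\n_\g(\delta_X(y))$ forces $\g_y=\n_\g(\delta_X(y))$.

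To conclude sphericity of $\delta_X(y)$ itself I would lift the toral quotient to a toral subalgebra $\t'\subset\g_y$ (possible by Jordan decomposition in the algebraic Lie algebra $\g_y$), so that $\g_y=\delta_X(y)+\t'$ as vector spaces. The main obstacle is to exhibit a Borel $\b\subset\g$ simultaneously satisfying $\t'\subset\b$ and $\b+\g_y=\g$; granted this, $\b+\delta_X(y)\supseteq\b+\g_y=\g$, as needed. I expect such $\b$ to exist by Luna's local structure theorem at the closed $G$-orbit of $X$: after $G$-conjugation $\t'$ may be placed inside the centre of the Levi of the parabolic $P^-$ stabilising the $B^-$-fixed point $z$, so that the opposite Borel $\b^+$ contains $\t'\subset T\subset\b^+$ while also giving $\b^++\g_y=\g$ by the generic position of $\b^+$ with respect to $\p^-$. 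The case of a general $y\in X$ reduces, via $G$-equivariance and the description of orbits in the local model $\Omega=\Rad(P^-)\times\Sigma$, to a statement about stabilisers of points in $\Sigma$, to which this argument applies.
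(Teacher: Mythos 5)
Be aware first of what the paper actually does here: it gives no argument at all, quoting the lemma as a direct consequence of Brion's results in \cite{Brion} (where it is proved that every subalgebra in $\overline{G\h}$ is algebraic and spherical, alongside the statement that $\delta_X$ is the normalization of $\overline{G\h}$). So your attempt is necessarily an independent proof. Your algebraicity step is correct, and the overall reduction --- write $\g_y=\delta_X(y)+\t'$ with $\t'$ toral and find one Borel $\b$ with $\t'\subset\b$ and $\b+\g_y=\g$, whence $\b+\delta_X(y)=\b+\t'+\delta_X(y)=\b+\g_y=\g$ --- is a sound strategy. (Note in passing that your displayed containment $\b+\delta_X(y)\supseteq\b+\g_y$ is backwards; what you actually get, once $\t'\subset\b$, is the equality.)

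The gap sits exactly at the step you yourself flag as ``the main obstacle,'' and your proposed resolution does not work as written. You first produce an abstract toral lift $\t'\subset\g_y$ by Jordan decomposition and then want to conjugate $\t'$ into $T\subset\b$; but the element of $G$ achieving this need not preserve the property $\b+\g_y=\g$, and the two constraints must be satisfied by one and the same Borel. Moreover, the justification ``$\b^++\g_y=\g$ by the generic position of $\b^+$ with respect to $\p^-$'' is a non sequitur once $y$ leaves the closed orbit, since then $\g_y\not\subset\p^-$ and transversality of $\b^+$ to $\p^-$ says nothing about $\g_y$. The repair is to reverse the order of operations: first use $G$-equivariance to move $y$ to a point $v$ of the slice $\Sigma\cong\K^r$ in the local structure theorem (every $G$-orbit meets $\Sigma$; write $v=(v_1,\dots,v_r)$ in coordinates of $T$-weights $\pm\alpha_1,\dots,\pm\alpha_r$ and set $I=\{i:\ v_i\neq 0\}$). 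For such $v$ one checks directly that (i) $Bv=\Rad_u(P)\cdot Tv$ is open in $Gv$, i.e.\ $\b+\g_v=\g$ for the standard Borel, and (ii) $T_vX/T_v(Gv)\cong\bigoplus_{j\notin I}\K e_j$, on which $\t\cap\g_v=\t\cap\bigcap_{i\in I}\ker\alpha_i$ acts with the weights $\alpha_j|$, $j\notin I$, which remain linearly independent because the spherical roots are linearly independent; hence $\t\cap\g_v$ already surjects onto the Cartan quotient $\g_v/\delta_X(v)$ and one may simply take $\t'=\t\cap\g_v\subset\t\subset\b$, with no abstract lift and no further conjugation. With (i) and (ii) in hand your concluding line goes through verbatim, so the argument is completable --- but as submitted, the decisive step is asserted (``I expect such $\b$ to exist'') rather than proved, and the supporting reason offered for it is incorrect.
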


It is known that $\n_\g(\h)=\h$. Conversely, as Knop proved in
\cite{Knop8}, for any spherical subalgebra $\h$ coinciding with its
normalizer there is a wonderful variety $X$ such that
$X^\varnothing\cong G/N_G(\h)$. Note that $X^\varnothing=G/N_G(\h)$
has no nontrivial equivariant automorphisms. We say that a wonderful
variety $X$ is {\it rigid} if $X^\varnothing$ has no nontrivial
equivariant automorphisms. So theorem \ref{Thm_main} is equivalent
to the claim that $\delta_X$ is an isomorphism provided $X$ is
rigid.

The rigidity of $X$ can be expressed in terms of
$\Psi_{G,X},\D_{G,X}$. To state this result we need the following
definition.

\begin{defi}\label{defi:2}
An element $\alpha\in \Psi_{G,X}$ is said to  be  {\it
distinguished} if one of the following conditions holds:
\begin{enumerate}
\item $\alpha\in \Pi$ and  $\varphi_{D_1}=\varphi_{D_2}$ for different elements $D_1,D_2\in \D_{G,X}(\alpha)$.
\item There is a subset $\Sigma\subset \Pi$ of type
$B_k,k\geqslant 2,$ such that $\alpha=\alpha_1+\ldots+\alpha_k$ and
$\D_{G,X}(\alpha_i)=\varnothing$ for any $i>1$.
\item There is a subset $\Sigma\subset \Pi$ of type $G_2$
such that $\alpha=2\alpha_2+\alpha_1$.
\end{enumerate}
\end{defi}

Here $\alpha_i$ denote the simple roots of $B_k,G_2$ such that
$\alpha_k$ (for $B_k$) and $\alpha_2$ (for $G_2$) are  short.

The following proposition is a direct corollary of Theorem 2 from
\cite{unique}.

\begin{Prop}\label{Prop:5}
$X$ is rigid iff there are no distinguished elements in
$\Psi_{G,X}$.
\end{Prop}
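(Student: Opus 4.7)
The plan is to reduce the claim to Theorem~2 of \cite{unique}, which classifies $G$-equivariant automorphisms of the homogeneous part $X^\varnothing$ of a wonderful variety in terms of its Luna invariants $(\Psi_{G,X},\X_{G,X},\D_{G,X})$. Rigidity of $X$ amounts to the triviality of $\Aut^G(X^\varnothing)=N_G(H)/H$, where $H=G_x$ for $x\in X^\varnothing$. The proposition is then the dictionary that matches the combinatorial criterion ``$\Psi_{G,X}$ contains a distinguished element'' with the existence of a nontrivial element in $\Aut^G(X^\varnothing)$.

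For the forward direction I would, in each of the three cases of Definition~\ref{defi:2}, exhibit an explicit nontrivial self-equivalence of the Luna datum and use the uniqueness part of Theorem~2 of \cite{unique} to lift it to an equivariant automorphism of $X^\varnothing$. In case~(1) this is simply the involution exchanging the two colors $D_1,D_2\in\D_{G,X}(\alpha)$, which preserves the datum because $\varphi_{D_1}=\varphi_{D_2}$ and $D_1,D_2$ are of the same type~b). In cases~(2) and (3) the type-$B_k$ or type-$G_2$ configuration inside $\Pi$ produces, after doubling the appropriate short root, a second spherical datum of which the original one is a proper cover; the deck transformation of this cover gives the desired nontrivial element of $\Aut^G(X^\varnothing)$. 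Thus in each case, the existence of a distinguished element obstructs rigidity.

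For the reverse direction I would apply Theorem~2 of \cite{unique} directly: it asserts that every self-equivalence of the Luna datum of a wonderful variety is generated by the three elementary moves associated to distinguished roots (color swap, $B_k$-doubling, $G_2$-doubling). Consequently, if $\Psi_{G,X}$ contains no distinguished element, then $\Aut^G(X^\varnothing)$ is trivial and $X$ is rigid.

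The real content of the argument is packaged inside \cite{unique}: the classification there is what rules out ``exotic'' self-equivalences of the Luna datum not originating from a distinguished root. I expect the main obstacle in writing this up to be bookkeeping --- namely, verifying in each of the three cases that the combinatorial automorphism indeed preserves the full datum (in particular, the pairings $\langle\varphi_D,\lambda\rangle$ and the assignment of parabolics $G_D$) so that Theorem~2 of \cite{unique} applies. Once that is checked, the proof is purely a translation.
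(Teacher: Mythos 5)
Your proposal takes the same route as the paper: the paper's entire ``proof'' is the single remark that the proposition is a direct corollary of Theorem~2 of \cite{unique}, and your argument is precisely the translation of that theorem (identifying rigidity with triviality of $\Aut^G(X^\varnothing)=N_G(H)/H$ and matching the three kinds of distinguished elements with the elementary self-equivalences of the Luna datum). The bookkeeping you defer to \cite{unique} is exactly what the paper defers as well, so this is correct and essentially identical in approach.
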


 Now let
$\Pi'\subset \Pi$ and $\alpha\in \Psi_{G_{\Pi'},X_{\Pi'}}$. If
$\alpha$ is distinguished in $\Psi_{G,X}$, then it is distinguished
in $\Psi_{G_{\Pi'},X_{\Pi'}}$. The converse is not true: any
$\alpha\in \Pi\cap\Psi_{G,X}$ is distinguished in
$\Psi_{G_{\alpha},X_{\alpha}}$. However, if $\alpha$ is of type 2,3
in $\Psi_{G_{\Pi'},X_{\Pi'}}$, then it is of the same type in
$\Psi_{G,X}$.

\section{Proof of Theorem \ref{Thm_main}}\label{SECTION_reduction}
In this section $X$ is a rigid wonderful $G$-variety. Put
$\h=\delta_X(x)$ for some point $x\in X^\varnothing$. Let $\Pi^a$
denote the subset of $\Pi$ consisting of all roots of type a) for
$X$.

 The following two assertions were  proved in
\cite{Luna_Dem}, Section 3.

\begin{Lem}\label{Lem:3.1}
If  the restriction of $d_x\delta_X$ to the $T$-eigenspace
$(T_zX)_\gamma$ of weight $\gamma$ is injective for any $\gamma\in
\X(T)$, then $\delta_X$ is an isomorphism.
\end{Lem}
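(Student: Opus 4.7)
The proof strategy is to leverage Brion's theorem that $\delta_X$ is the normalization of $\overline{G\h}$. Since $\delta_X$ is in particular finite, birational, and surjective onto the irreducible target, it becomes an isomorphism as soon as it is unramified, i.e.\ as soon as $d_y\delta_X$ is injective for every $y\in X$. So the plan is to bootstrap the single-weight-space hypothesis of the lemma up to global injectivity of the differential, using first $G$-equivariance and then $T$-equivariance.

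First I would pass from ``every $y\in X$'' down to ``the single point $z$.'' The locus $Z\subset X$ on which $d_y\delta_X$ fails to be injective is closed, and by $G$-equivariance of $\delta_X$ it is $G$-stable. Since $X$ is wonderful, its unique closed $G$-orbit is $\bigcap_{i=1}^{r}D_i=Gz$ (from the case $I=\{1,\ldots,r\}$ in Definition \ref{defi:1}), and every nonempty closed $G$-stable subvariety of $X$ contains this closed orbit because it lies in the closure of every $G$-orbit. Hence it suffices to prove that $d_z\delta_X$ is injective.

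Next I would exploit that $z$ is $T$-fixed, as is its image: by construction $\delta_X(z)$ is the intersection of the kernels of the spherical roots $\alpha\in\Psi_{G,X}$ acting on $\g_z$, and hence is a $T$-stable Lie subalgebra of $\g$. Consequently $T$ acts on both $T_zX$ and $T_{\delta_X(z)}\Gr_d(\g)$, and $d_z\delta_X$ is $T$-equivariant, so it sends each weight space of the source into the weight space of the target with the same weight. Given $v\in\ker d_z\delta_X$, decomposing $v=\sum_\gamma v_\gamma$ and projecting onto each weight space of the target forces $v_\gamma$ to lie in the kernel of the restriction $d_z\delta_X\big|_{(T_zX)_\gamma}$ for every $\gamma$. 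The hypothesis of the lemma then gives $v_\gamma=0$ for all $\gamma$, hence $v=0$, and we are done.

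The only nontrivial input beyond Brion's normalization theorem is the initial reduction: a finite birational morphism between irreducible varieties (in characteristic zero) whose differential is injective at every point is \'etale, and a finite \'etale birational surjection onto an irreducible target necessarily has fibers of cardinality one, hence is an isomorphism. This is standard, so the main conceptual step is really just the organization of the equivariance arguments; once the $G$-equivariant reduction to $z$ and the $T$-equivariant weight decomposition at $z$ are in place, the lemma falls out immediately.
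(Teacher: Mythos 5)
The paper does not actually contain its own proof of this lemma: it is quoted from \cite{Luna_Dem}, Section 3. So your argument has to stand on its own, and most of it does. The reduction from all of $X$ to the point $z$ (the non-injectivity locus of the differential is closed and $G$-stable, hence would contain the unique closed orbit) is correct, and so is the $T$-equivariant weight-space decomposition of $d_z\delta_X$. The fatal step is the very last one: the claim that a finite birational morphism between irreducible varieties whose differential is injective at every point is \'etale, hence an isomorphism. Injectivity of the differential everywhere only makes $\delta_X$ \emph{unramified}; \'etale additionally requires flatness, and that implication is false. The standard counterexample is exactly of the shape relevant here: the normalization $\mathbb{A}^1\rightarrow\{y^2=x^2(x+1)\}$ of a nodal cubic is finite, birational, surjective, and has injective differential at every point (including both preimages of the node), yet it is not flat at those two points and is not an isomorphism. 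Since $\delta_X$ is precisely a normalization map onto a possibly non-normal target, ruling out this kind of behaviour is the entire content of the lemma, and ``unramified everywhere'' alone cannot do it.

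What is missing is set-theoretic control of the fibre over $\delta_X(z)$. One way to supply it: $\delta_X^{-1}(\delta_X(z))$ is a finite set stable under the connected group $B^-$ (because $\delta_X(z)$ is $B^-$-fixed and $\delta_X$ is equivariant), hence consists of $B^-$-fixed points; every $B^-$-fixed point has parabolic stabilizer and therefore lies in a closed $G$-orbit, and the unique closed orbit $Gz$ of the wonderful variety $X$ contains exactly one $B^-$-fixed point, so the fibre is $\{z\}$. Finiteness, the one-point fibre, and unramifiedness at $z$ then give, via Nakayama, that $\mathcal{O}_{\overline{G\h},\,\delta_X(z)}\rightarrow\mathcal{O}_{X,z}$ is surjective, so $\delta_X$ is a closed immersion --- hence, being birational and surjective onto an irreducible target, an isomorphism --- over a neighbourhood of $\delta_X(z)$; by $G$-equivariance this neighbourhood sweeps out all of $\overline{G\h}$, since $G\delta_X(z)$ is its unique closed orbit and so meets every nonempty closed $G$-stable subset. (Luna's own proof packages the same information through the local structure theorem at $z$.) Note that once the fibre over $\delta_X(z)$ is handled this way, your preliminary reduction to global injectivity of the differential is not even needed: only $d_z\delta_X$ matters, which is why the lemma is stated as a condition at $z$ alone.
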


If $\gamma\not\in \Psi_{G,X}$, then $(T_zX)_\gamma\subset T_z(Gz)$.
Since $\delta_X:X\rightarrow \overline{G\h}$ is the normalization
morphism,  the restriction of $\delta_X$ to $Gz$ is an embedding.

\begin{Prop}\label{Prop:3.1}
Let $\alpha\in \Psi_{G,X}$ and $\Pi'$ be a subset of $\Pi$
containing $\Supp(\alpha)\cup\Pi^a$. The restriction of
$d_z\delta_X$ to $(T_zX)_\alpha$ is injective provided so is the
restriction of $d_z\delta_{X_{\Pi'}}$ to $(T_zX_{\Pi'})_\alpha$.
\end{Prop}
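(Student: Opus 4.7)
The plan is to exploit the fact that $X_{\Pi'}\subset X$ already captures the $\alpha$-weight piece of $d_z\delta_X$, and to compare the two Demazure differentials via a natural map between the ambient Grassmannians induced by $\g_{\Pi'}\hookrightarrow\g$.

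First, I would identify the relevant weight spaces. At the $B^-$-fixed point $z$ one has the $T$-decomposition $T_zX=T_z(Gz)\oplus\bigoplus_{i=1}^r T_zX/T_zD_i$, in which $T_z(Gz)=\g/\g_z$ has weights among the positive roots of $G$ and each summand $T_zX/T_zD_i$ is one-dimensional of weight $\alpha_i$. Hence $(T_zX)_\alpha$ decomposes into the normal line corresponding to $\alpha$ together with the root spaces $\g_\beta\subset\g/\g_z$ for positive roots $\beta=\alpha$ of $G$. Any such $\beta$ satisfies $\Supp(\beta)=\Supp(\alpha)\subset\Pi'$, so $\g_\beta\subset\g_{\Pi'}/(\g_{\Pi'})_z\subset T_zX_{\Pi'}$; combined with $\alpha\in\Psi_{G_{\Pi'},X_{\Pi'}}$ (property (3) of localization), this yields the equality $(T_zX_{\Pi'})_\alpha=(T_zX)_\alpha$ inside $T_zX$.

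Next I would compare the Demazure morphisms themselves. Writing $\h_z:=\delta_X(z)\subset\g_z$ and $\h'_z:=\delta_{X_{\Pi'}}(z)\subset(\g_{\Pi'})_z$ as inefficiency kernels on the respective normal bundles, the central claim to prove is that $\h_z\cap\g_{\Pi'}=\h'_z$, that the induced map $\Hom(\h'_z,\g_{\Pi'}/\h'_z)\to\Hom(\h_z,\g/\h_z)$ is injective on the $\alpha$-weight piece, and that the resulting square commutes with $d_z\delta_X$ and $d_z\delta_{X_{\Pi'}}$ under the weight-space identification above. The conclusion is then formal: a nonzero $v\in(T_zX)_\alpha$ killed by $d_z\delta_X$ would produce, via the identification and the injective vertical map, a nonzero vector in $(T_zX_{\Pi'})_\alpha$ killed by $d_z\delta_{X_{\Pi'}}$, contradicting the hypothesis.

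The main obstacle is the identity $\h_z\cap\g_{\Pi'}=\h'_z$. By construction $\h_z$ is cut out of $\g_z$ by the vanishing of all of $\alpha_1,\dots,\alpha_r$ viewed as characters, while $\h'_z$ is cut out of $(\g_{\Pi'})_z$ only by those $\alpha_i\in\Psi_{G_{\Pi'},X_{\Pi'}}$; the extra conditions coming from spherical roots with $\Supp(\alpha_i)\not\subset\Pi'$ must therefore turn out to be automatic on $(\g_{\Pi'})_z$. This is exactly where $\Pi^a\subset\Pi'$ enters: by Proposition \ref{Prop:1}, the type a) simple roots are precisely those whose parabolic $P_\alpha$ lies in every color stabilizer, and including them in $\Pi'$ ensures that $\g_{\Pi'}$ already contains every direction in $\g_z$ along which the ``missing'' characters $\alpha_i|_{(\g_{\Pi'})_z}$ could fail to vanish. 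A careful weight-by-weight verification using Proposition \ref{Prop:1} and the explicit description of $\delta_X(z)$ as an inefficiency kernel should then yield both the intersection identity and the commutativity of the diagram simultaneously.
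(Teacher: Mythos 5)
The paper does not actually prove this proposition: it is quoted verbatim from \cite{Luna_Dem}, Section~3, so any self-contained argument here is new relative to the paper. Your overall shape is reasonable and matches what such a proof must look like: identify $(T_zX_{\Pi'})_\alpha$ with $(T_zX)_\alpha$ inside $T_zX$, then compare $d_z\delta_X$ and $d_z\delta_{X_{\Pi'}}$ through a map of Grassmannians. The first step is fine (it uses $\Supp(\alpha)\subset\Pi'$ and property (3) of the localization, and the two spaces have the same dimension, namely $1$ or $2$ according to whether $\alpha$ is a root of $\g$).

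The gap is in the second step, which carries the entire content. The identity $\h_z\cap\g_{\Pi'}=\h'_z$ is a statement about the single fibers $\delta_X(z)$ and $\delta_{X_{\Pi'}}(z)$; by itself it induces no map $\Hom(\h'_z,\g_{\Pi'}/\h'_z)\to\Hom(\h_z,\g/\h_z)$ ($\Hom$ is contravariant in the first slot, so one also needs a distinguished complement $\c$ with $\h_z=\h'_z\oplus\c$), and, more seriously, it says nothing about the differentials. Injectivity of $d_z\delta_X$ on $(T_zX)_\alpha$ concerns the first-order variation of the subspace $\delta_X(y)$ as $y$ moves in $X_{\Pi'}$ away from $z$. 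For your square to commute you must show that $\delta_X(y)=\c\oplus\delta_{X_{\Pi'}}(y)$ with $\c$ \emph{fixed} (something like $\c=\Rad_u(\q^-)\oplus\bigl(\z(\m)\cap\delta_X(z)\bigr)$) holds identically for $y\in X_{\Pi'}$, or at least to first order at $z$ --- equivalently, that $\delta_X|_{X_{\Pi'}}$ factors as a fixed closed immersion of Grassmannians composed with $\delta_{X_{\Pi'}}$. Establishing that identity of morphisms requires tracking the isotropy algebras $\g_y$ and the normal representations $T_yX/T_y(Gy)$ for varying $y$, and this is where the hypothesis $\Pi^a\subset\Pi'$ and the structure of $\q^-$ genuinely enter (not in the way you describe: the issue is not which directions of $\g_z$ lie in $\g_{\Pi'}$, but whether the ``extra'' part of $\delta_X(y)$ stays constant along $X_{\Pi'}$). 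This is in substance what Luna proves in \cite{Luna_Dem}, Section~3; a ``weight-by-weight verification'' carried out at the single point $z$ cannot yield it. As written, the proposal is a plausible strategy whose decisive step is missing.
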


\begin{defi}\label{defi:3.2}
Let  $\alpha\in \Psi_{G,X}$. We say that $X$ is {\it critical for}
$\alpha$ if $\alpha$ is not distinguished in $\Psi_{G,X}$ but is
distinguished in $\Psi_{G_{\Pi'},X_{\Pi'}}$ for any $\Pi'\subsetneq
\Pi$ containing  $\Pi^a\cup\Supp(\alpha)$. %We
%say that $X$ is {\it critical} if it is critical for some $\alpha\in
%\Psi_{G,X}$.
\end{defi}

So we need  to prove the following claim:
\begin{itemize}
\item[(*)]  If $X$ is critical for $\alpha\in \Psi_{G,X}$,
then the restriction of $d_z\delta_X$ to $(T_zX)_\alpha$ is
injective.
\end{itemize}

%\begin{Lem}\label{Lem:3.3}
%Let  $\alpha\in \Psi_{G,X}$. Suppose that $X$ is critical for
%$\alpha$. If $\alpha\not\in \Pi$, then $\Supp(\alpha)=\Pi$. If
%$\alpha\in \Pi$, then there is $\beta\in \Psi_{G,X}$ such that
%$\Supp(\beta)\cup\{\alpha\}=\Pi$ and
%$\langle\varphi_{D^+},\beta\rangle\neq
%\langle\varphi_{D^-},\beta\rangle$ for  different $D^+,D^-\in
%\D_{G,X}(\alpha)$.
%\end{Lem}
%\begin{proof}
%Let $\Pi^a$ denote the subset of $\Pi$ consisting of all roots of
%type a) for $X$. Suppose $\alpha\not\in \Pi(\g)$. Then
%$\Pi=\Supp(\alpha)\cup \Pi^a$. By Lemma 6.1 from \cite{Knop_conj},
%$\Pi=\Supp(\alpha)$. The claim
%for $\alpha\in \Pi$ is obvious. %Since $\alpha$ is not distinguished,
%there is $\beta\in \Psi_{G,X}$ such that
%$\langle\varphi_{D^+},\beta\rangle\neq
%\langle\varphi_{D^-},\beta\rangle$.  Since $X$ is critical for
%$\alpha$, it follows that  $\Pi=\{\alpha\}\cup
%\Supp(\beta)\cup\Pi^a$. Lemma 6.1 from \cite{Knop_conj} implies that
%$\Pi=\{\alpha\}\cup \Supp(\beta)$.
%\end{proof}

\begin{Prop}
Let $X$ be critical for $\alpha\in \Psi_{G,X}$. (*) holds for $X$
provided $\alpha\not\in \Pi$.
\end{Prop}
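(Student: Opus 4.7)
The plan is to combine a combinatorial reduction coming from the criticality hypothesis with a direct inspection of the possibilities for $X$.

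\textbf{Combinatorial reduction.} The first step is to show that, when $\alpha \notin \Pi$, the criticality of $X$ for $\alpha$ forces the equality $\Pi^a \cup \Supp(\alpha) = \Pi$. Indeed, if $\Pi' := \Pi^a \cup \Supp(\alpha)$ were a proper subset of $\Pi$, then by Definition \ref{defi:3.2} the root $\alpha$ would be distinguished in $\Psi_{G_{\Pi'}, X_{\Pi'}}$. Since $\alpha \notin \Pi$, case (1) of Definition \ref{defi:2} is excluded, so the distinguishedness must be of type (2) or (3). But the remark following Proposition \ref{Prop:5} states that types (2) and (3) pass from the localization back to $X$, so $\alpha$ would then be distinguished in $\Psi_{G,X}$, contradicting the definition of critical.

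\textbf{Geometric setup and reduction to rank one.} With $\Pi = \Pi^a \cup \Supp(\alpha)$ in hand, I would pass to the rank-one wonderful subvariety $Y := \overline{X}^{\Psi_{G,X} \setminus \{\alpha\}}$, whose unique spherical root is $\alpha$. Since $Y$ is the intersection of the divisors $D_i$ with $\alpha_i \neq \alpha$, the weight-$\alpha$ piece of the conormal $T_z X / T_z Y$ vanishes, so $(T_z X)_\alpha = (T_z Y)_\alpha$ and the problem reduces to showing injectivity of $d_z \delta_Y$ restricted to $(T_z Y)_\alpha$. This space decomposes as the direct sum of the one-dimensional normal direction to $\overline{Y}^{\alpha}$ and the weight-$\alpha$ piece of $T_z(Gz) \cong \g/\g_z$, the latter being non-zero exactly when $\alpha$ happens to be a root of $\g$.

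\textbf{Case-by-case injectivity.} Rank-one wonderful $G$-varieties with a non-simple spherical root are classified by Wasserman, and the condition $\Pi = \Pi^a \cup \Supp(\alpha)$ together with non-distinguishedness of $\alpha$ leaves only a short list of possibilities for the pair $(\Psi_{G,Y}, \D_{G,Y})$ via Propositions \ref{Prop:1}--\ref{Prop:2}. For each surviving case I would compute $d_z\delta_Y$ explicitly: on the $G$-orbit part by the standard formula $d_z\delta_Y(\xi \cdot z) = \ad(\xi)|_{\delta_Y(z)} \bmod \delta_Y(z)$, and on the normal direction via an affine chart of $Y$ centered at $z$. The main obstacle is the case $\dim (T_z Y)_\alpha = 2$ (when $\alpha$ is also a root of $\g$): one must prove that the image of the root-vector direction and the image of the normal direction are linearly independent in $T_{\delta_Y(z)} \Gr_d(\g)$. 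I expect this to reduce to showing that any non-trivial linear dependence would reproduce, via Propositions \ref{Prop:1}--\ref{Prop:2}, one of the distinguished configurations of Definition \ref{defi:2}, which is excluded by non-distinguishedness of $\alpha$.
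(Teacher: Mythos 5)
Your opening combinatorial step is sound and matches the paper: criticality together with $\alpha\notin\Pi$ forces $\Pi=\Pi^a\cup\Supp(\alpha)$, because distinguishedness of types (2) and (3) passes from a localization back to $X$ (the paper pushes further, via Lemma 6.1 of \cite{Knop_conj}, to $\Pi=\Supp(\alpha)$, which is what actually pins the possible triples $(\g,\h_1,\alpha)$ down to four explicit cases from Wasserman's table). After that, however, there are two genuine gaps. First, the reduction to the rank-one subvariety $Y=\overline{X}^{\Psi_{G,X}\setminus\{\alpha\}}$ is not legitimate as stated: while indeed $(T_zX)_\alpha=(T_zY)_\alpha$ inside $T_zX$, the Demazure morphism $\delta_Y$ of $Y$ viewed as a wonderful variety in its own right lands in $\Gr_{d'}(\g)$ with $d'=\dim\h+\rank_G(X)-1$ and is a different map from $\delta_X|_Y$; for $y$ in the open orbit of $Y$ one only knows that $\delta_X(y)$ is an ideal of $\delta_Y(y)$ with commutative diagonalizable quotient. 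Injectivity of $d_z\delta_Y$ on $(T_zY)_\alpha$ does not formally transfer to $d_z\delta_X$, and where the paper needs an analogous passage (in the $\alpha\in\Pi$ case, via $\overline{X}^{\{\alpha,\beta\}}$) it supplies a separate normalizer argument. You would need to prove this reduction, not assert it.

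Second, and more seriously, the heart of the proposition --- why the (possibly two-dimensional) space $(T_zX)_\alpha$, spanned by the normal direction to $\overline{X}^\alpha$ and the root direction when $\alpha$ happens to be a root of $\g$, is not collapsed --- is left in your proposal as an ``expectation.'' The paper's mechanism is entirely different: by Pezzini's Theorem 3.4 there is a simple module $V$ and a $G$-morphism $\varphi:X\to\P(V)$ restricting to an embedding on $\overline{X}^\alpha$, and one shows that $\varphi|_{\overline{X}^\alpha}$ factors as $\psi\circ\delta_X|_{\overline{X}^\alpha}$ by constructing $\psi:\f\mapsto V^{\f}$ on $\delta_X(\overline{X}^\alpha)$. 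This construction is where the real work lies: it uses Lemma \ref{Lem:3.2} (every subalgebra in the image is spherical, hence $\dim V^{\f}\leqslant 1$) together with either finiteness of the character group of $G_x$ (to get $\dim V^{\f}\geqslant 1$) or, in the remaining four explicit cases, the observation that $[\h_1,\h_1]$ is a spherical subalgebra of codimension one, through which one routes the map. Injectivity of $d_z\delta_X$ on $(T_zX)_\alpha$ then follows because an embedding factors through $\delta_X$. Without something playing the role of this factorization, your case-by-case plan has no actual mechanism for ruling out a linear dependence between the images of the root-vector direction and the normal direction, and the appeal to ``reproducing a distinguished configuration'' is not an argument.
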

\begin{proof}
It follows from \cite{Pezzini}, Theorem 3.4, that there is a simple
module $V$ and a $G$-equivariant morphism $\varphi:X\rightarrow
\P(V)$ such that the restriction of $\varphi$ to
$\overline{X}^{\alpha}$ is an embedding. So it remains to show that
there is a morphism $\psi: \delta_X(\overline{X}^\alpha)\rightarrow
\P(V)$ such that
$\psi\circ\delta_X|_{\overline{X}^\alpha}=\varphi|_{\overline{X}^\alpha}$.
Set $Y:=\delta_X(\overline{X}^\alpha),\h_1:=\delta_X(x)$ for some
$x\in X^{\alpha}, \h_0:=\delta_X(z)$.

Suppose, at first, that the character group of $G_x, x\in X^\alpha,$
is finite. It follows that any $G_x$-semiinvariant vector in $V$ is
$\g_x$-invariant. Therefore $\dim V^\f\geqslant 1$ for any $\f\in
Y$. By Lemma \ref{Lem:3.2}, $\f$ is spherical for any $\f\in Y$
whence $\dim V^{\f}\leqslant 1$. Thus the map $\psi: Y\rightarrow
\P(V), \f\mapsto V^\f,$ is well-defined. Let us check that this map
is a morphism of varieties. Let $Z$ denote the subvariety of $\g^d,
d=\dim\h_1,$ consisting of all linearly independent $d$-tuples and
$\pi:Z\twoheadrightarrow \Gr_d(\g)$ be the natural projection.
Choose a basis $v^1,\ldots,v^n\in V^*$. %Define the morphism
%$\rho:\pi^{-1}(Y)\rightarrow (V^*)^{\oplus dn}$ by setting
%$\rho(\xi_1,\ldots,\xi_d)=(\xi_i v^j)_{i=1,j=1}^{d,n}$.
 Since $\dim
V^\f=1$ for all $\f\in Y$, we have the natural morphism
$\widetilde{\psi}:\pi^{-1}(Y)\rightarrow \P(V)\cong \Gr_{\dim
V-1}(V^*)$ mapping $(\xi_1,\ldots,\xi_d)$ to the linear span of
$\xi_i v^j, i=\overline{1,d},j=\overline{1,n}$. Now recall that
$\pi:\pi^{-1}(Y)\rightarrow Y$ is the quotient morphism for the
natural action $\GL_d:\pi^{-1}(Y)$ and
$\widetilde{\psi}:\pi^{-1}(Y)\rightarrow \P(V)$ is
$\GL_d$-invariant. Therefore $\widetilde{\psi}$ factors through a
unique morphism $\psi:Y\rightarrow \P(V)$. Clearly,
$\psi\circ\delta_X|_{\overline{X}^\alpha}=\varphi|_{\overline{X}^\alpha}$.

Now consider the general case.  Since $X$ is critical for $\alpha$,
we have $\Pi=\Supp(\alpha)\cup \Pi^a$. By Lemma 6.1 from
\cite{Knop_conj}, $\Pi=\Supp(\alpha)$. Then, inspecting Table 1 in
\cite{Wasserman}, we see that $(\g,\h_1,\alpha)$ is one of the
following triples:
\begin{enumerate}
\item $\g=\sl_{n+1},n\geqslant 2, \h_1=\gl_n,
\alpha=\alpha_1+\ldots+\alpha_n$.
\item $\g=\so_{2n+1},n\geqslant 2, \h_1=\gl_n\ltimes \bigwedge^2
\K^2$, $\alpha=\alpha_1+\ldots+\alpha_n$.
\item $\g=\sp_{2n}, n\geqslant 2, \h_1=(\sp_{2n-2}\times\so_2)\ltimes \K, \alpha=\alpha_1+2\alpha_2+\ldots+
2\alpha_{n-1}+\alpha_n$.
\item $\g=G_2, \h_1=(\t_1\times\sl_2)\ltimes (\K^2\oplus\K)$, where
$\t_1$ is a one-dimensional reductive subalgebra of $\g$,
$\alpha=\alpha_1+\alpha_2$.
\end{enumerate}
Note that $\codim_{\h_1}[\h_1,\h_1]=1$ and $[\h_1,\h_1]$ is a
spherical subalgebra of $\g$. In all cases $\h_0$ is the kernel of
$\alpha$ in a certain parabolic subalgebra of $\g$ containing
$\b^-$. Since $\alpha\not\in \Pi$, we get
$\codim_{\h_0}[\h_0,\h_0]=1$. Analogously to the previous paragraph,
 the map
$\widetilde{\psi}:\overline{G[\h_1,\h_1]}\rightarrow \P(V)$ mapping
$\f\in \overline{G[\h_1,\h_1]}$ to $V^\f$ is a morphism. Since
$\codim_{\h_0}[\h_0,\h_0]=\codim_{\h_1}[\h_1,\h_1]$, we see that
there is a (unique) $G$-equivariant map $\iota:Y\rightarrow
\overline{G[\h_1,\h_1]}$ mapping $\h_1$ to $[\h_1,\h_1]$ and $\h_0 $
to $[\h_0,\h_0]$. Using a technique similar to that from the
previous paragraph, we get that $\iota$ is a morphism. It remains to
put $\psi=\widetilde{\psi}\circ\iota$.
\end{proof}

The idea to use results of \cite{Pezzini} in the proof of smoothness
of Demazure's embedding is due to Knop.

So it remains to consider the case when $X$ is critical for
$\alpha\in \Pi\cap\Psi_{G,X}$. Suppose at first that $\rank_G(X)=2$.
Let $D^+,D^-$ denote different elements of $\D_{G,X}(\alpha)$ and
$\beta$ a unique element of $\Psi_{G,X}\setminus\{\alpha\}$. Since
$\alpha$ is not distinguished, we have $\langle
\varphi_{D^+},\beta\rangle\neq \langle\varphi_{D^-},\beta\rangle$.
It was essentially proved by Luna in \cite{Luna_Dem}, Assertion 4,
that in this case the restriction of $d_z\delta_X$ to
$(T_zX)_\alpha$ is injective (note that $c.[X^\gamma]$ in (**) in
the proof of Assertion 4 is equal to
$\langle\varphi_{D^\pm},\gamma\rangle$, thanks to Lemmas 3.2.3,3.3
from \cite{Luna4}).

We are going to reduce the general case to the previous one. Choose
$\beta\in \Psi_{G,X}$ with $\langle\beta,\varphi_{D^+}\rangle\neq
\langle \beta,\varphi_{D^-}\rangle$.  Then
$\langle\varphi_{D_0^\pm},\beta\rangle=\langle\varphi_{D^\pm},\beta\rangle$
for different elements $D_0^\pm\in
\D_{G,\overline{X}^{\{\alpha,\beta\}}}(\alpha)$ (see \cite{Luna5},
the last paragraph of Subsection 3.2). Clearly, $(T_zX)_\alpha=
(T_z\overline{X}^\alpha)_\alpha=
(T_z\overline{X}^{\{\alpha,\beta\}})_\alpha$. Choose $x\in X^\alpha$
and put $\h_0=\delta_X(x),
\widetilde{\h}_0=\delta_{\overline{X}^{\{\alpha,\beta\}}}(x)$. It
follows from the properties of the Demazure morphisms quoted in
Section \ref{SECTION_prelim} that $\h_0$ is an ideal of
$\widetilde{\h}_0$ and $\widetilde{\h}_0/\h_0$ is a commutative
diagonalizable Lie algebra (of dimension $\rank_G(X)-2$).

Let $Q$ denote the parabolic subgroups of $G$ containing
 $B^-$ corresponding to the subset $\Pi^a\sqcup\{\alpha\}\subset
\Pi$. Let $\q_0$ denote a unique ideal in $\q$ complimentary to
$\g_\alpha$ and $Q_0$ be the connected subgroup of $Q$ with Lie
algebra $\q_0$. Then $\overline{X}^\alpha$ is $G$-equivariantly
isomorhic to $G*_QX_\alpha$, where $Q$ acts on $X_\alpha$ via the
projection $Q\twoheadrightarrow Q/Q_0$. As a $G_\alpha$-variety
$X_\alpha$ is isomoprhic to $\P^1\times\P^1$. Note that
$\h_0,\widetilde{\h}_0$ are ideals in $\g_x$ of codimension
$\rank_G(X),2,$ respectively.

Since $\delta_{\overline{X}^{\{\alpha,\beta\}}}$ is an isomorphism,
we have $G_x=N_G(\widetilde{\h}_0)$. Let $\widetilde{H}_0, H_0$
denote the connected subgroups of $G$ with  Lie algebras
$\widetilde{\h}_0,\h_0$. From  Lemma \ref{Lem:3.2} it follows that
$\h_0$ is a spherical subalgebra of $\g$. It follows that
$N_G(\h_0)/H_0$ is a commutative group. In particular,
$\widetilde{H}_0/H_0$ commutes with $N_G(\h_0)/H_0$ whence
$N_G(\h_0)\subset N_G(\widetilde{\h}_0)=G_x$. On the other hand,
$G_x\subset N_G(\h_0)$, for $\delta_X$ is $G$-equivariant. So  the
restriction of $\delta_X$ to $X^\alpha$ is injective.

Now we apply the argument from \cite{Luna_Dem}, proof of Assertion
4. Choose $x\in X_\alpha^\varnothing$. By above, $\g_x=\t+\q_0$. Let
$\m$ denote a unique Levi subalgebra of $\q$ containing $\t$. Set
$\m_0:=\m\cap\q_0$. Since $\g_x\subset \n_\g(\delta_X(x))$ and
$\g_x/\delta_X(x)$ is a diagonalizable Lie algebra, we see that
$\underline{\q}:=\Rad_u(\q)+[\m_0,\m_0]\subset \delta_X(x)$.  So we
may consider $\delta_X|_{X_\alpha}$ as a morphism to
$\q/\underline{\q}$. The Lie algebra $\q/\underline{\q}$ is
identified with $\m_1:=\g_\alpha\oplus \z(\m_0)$. Let $M_1$ denote
the connected subgroup of $G$.  As we have shown above,
$N_{M_1}(\delta_X(x))$ is a maximal torus of $M_1$ for any $x\in
X_\alpha^\varnothing$. Analogously to the proof of Assertion 4 in
\cite{Luna_Dem}, $\overline{M_1\delta_X(x)}$ is smooth. So
$\delta_X$ is an isomorphism of $X_\alpha$ to
$\overline{M_1\delta_X(x)}$ whence its restriction to
$(T_xX)_\alpha$ is injective.

\bigskip

{\Small Chair of Higher Algebra, Department of Mechanics and
Mathematics, Moscow State University.

E-mail address: ivanlosev@yandex.ru}
\end{document}